%
%
\documentclass[a4paper,12pt,twoside,leqno,final]{amsart}
\usepackage{amsmath}
\usepackage{amssymb}

\setlength{\textwidth}{15cm}
\setlength{\textheight}{22cm}
\setlength{\oddsidemargin}{2cm}
\setlength{\hoffset}{-2cm}
\setlength{\voffset}{-1cm}

\newtheorem{thm}{Theorem}[section]
\newtheorem{lem}[thm]{Lemma}

\newtheorem{cor}[thm]{Corollary}

\newcommand{\C}{{\mathbb C}}
\newcommand{\D}{{\mathbb D}}

\newcommand{\T}{{\mathbb T}}

\newcommand{\N}{{\mathbb N}}
\newcommand{\cN}{{\mathcal N}}

\newcommand{\bmo}{{\rm BMO}}
\newcommand{\bmoa}{{\rm BMOA}}

\newcommand{\f}{\frac}
\newcommand{\ov}{\overline}

\newcommand{\ze}{\zeta}

\newcommand{\ph}{\varphi}

\newcommand{\const}{\text{\rm const}}

\numberwithin{equation}{section}

\title[Nevanlinna and Smirnov classes: factoring the derivatives]
{Factoring derivatives of functions\\ 
in the Nevanlinna and Smirnov classes}
\author{Konstantin M. Dyakonov}
\address{ICREA and Universitat de Barcelona, Departament de Matem\`atica 
Aplicada i An\`alisi, Gran Via 585, E-08007 Barcelona, Spain}
\email{konstantin.dyakonov@icrea.es}
\keywords{Nevanlinna class, Smirnov class, BMOA, derivatives, factorization, zero sets} 
\subjclass[2000]{30D50, 30D55.} 
\thanks{Supported in part by grant MTM2008-05561-C02-01 from El Ministerio de Ciencia 
e Innovaci\'on (Spain) and grant 2009-SGR-1303 from AGAUR (Generalitat de Catalunya).}

\begin{document}
\begin{abstract}
We prove that, given a function $f$ in the Nevanlinna class $\cN$ and a positive integer 
$n$, there exist $g\in\cN$ and $h\in\bmoa$ such that $f^{(n)}=gh^{(n)}$. We may choose $g$ 
to be zero-free, so it follows that the zero sets for the class $\cN^{(n)}:=\{f^{(n)}:f\in\cN\}$ 
are the same as those for $\bmoa^{(n)}$. Furthermore, while the set of all products 
$gh^{(n)}$ (with $g$ and $h$ as above) is strictly larger than $\cN^{(n)}$, 
we show that the gap is not too large, at least when $n=1$. Precisely speaking, the class 
$\{gh':g\in\cN,\,h\in\bmoa\}$ turns out to be the smallest ideal space containing $\{f':f\in\cN\}$, 
where \lq\lq ideal" means invariant under multiplication by $H^\infty$ functions. Similar 
results are established for the Smirnov class $\cN^+$. 
\end{abstract}

\maketitle

\section{Introduction and results} 

Let $\mathcal H(\D)$ stand for the set of holomorphic functions on the disk $\D:=\{z\in\C:|z|<1\}$. 
Given a class $X\subset\mathcal H(\D)$ and an integer $n\in\N:=\{1,2,\dots\}$, we write 
$$X^{(n)}:=\{f^{(n)}:\,f\in X\},$$ 
where $f^{(n)}$ is the $n$th derivative of $f$. When $n=1$, we also use the notation $X'$ instead 
of $X^{(1)}$. Further, we denote by $\mathcal Z(X)$ the collection of zero sets for $X$; 
a (discrete) subset $E$ of $\D$ will thus belong to $\mathcal Z(X)$ 
if and only if $E=\{z\in\D:f(z)=0\}$ for some non-null function $f\in X$. 
Now, if $X$ and $Y$ are subclasses of $\mathcal H(\D)$, we put 
$$X\cdot Y:=\{fg:\,f\in X,\,g\in Y\}.$$ 
Finally, a vector space $X$ contained in $\mathcal H(\D)$ is said to be {\it ideal} if 
$$H^\infty\cdot X\subset X,$$ 
where, as usual, $H^\infty$ is the space of bounded holomorphic functions on $\D$. 

\par Our starting point is a result of Cohn and Verbitsky \cite{CV} which asserts, or rather implies, that 
\begin{equation}\label{eqn:factcv}
\left(H^p\right)^{(n)}=H^p\cdot\bmoa^{(n)}
\end{equation}
whenever $0<p<\infty$ and $n\in\N$. Here, we write $H^p$ for the classical (holomorphic) {\it Hardy spaces} on the 
disk, and $\bmoa$ for the \lq\lq analytic subspace" of $\bmo=\bmo(\T)$, the space of functions with {\it bounded mean 
oscillation} on the circle $\T:=\partial\D$. More precisely, $\bmoa$ can be defined as $H^1\cap\bmo$; as to the 
definitions of (and background information on) $H^p$ and $\bmo$, the reader will find these standard matters in 
\cite[Chapters II and VI]{G}. 
\par For $n=1$, identity \eqref{eqn:factcv} appeared in Cohn's earlier paper \cite{C}. On the other hand, \cite{CV} 
extends \eqref{eqn:factcv} to the case of a fractional derivative and still further; indeed, more general factorization 
theorems involving tent spaces -- and Triebel spaces -- are actually established there. It is also shown in \cite{CV} 
that, when factoring $f^{(n)}$ (for $f\in H^p$) in the sense of \eqref{eqn:factcv}, one may choose the $H^p$ factor 
on the right to be an outer function. As a consequence, one sees that 
\begin{equation}\label{eqn:zeroshp}
\mathcal Z\left(\left(H^p\right)^{(n)}\right)=\mathcal Z\left(\bmoa^{(n)}\right).
\end{equation}
In particular, for any fixed $n$, the zero sets for $\left(H^p\right)^{(n)}$ are the same for all $p\in(0,\infty)$. This 
last fact was contrasted in \cite{CV} with the Bergman space situation (where, by \cite{Ho}, different $A^p$ spaces 
have different zero sets). We wish to add, in this connection, that a similar Bergman-type phenomenon (different 
zero sets for different $p$'s) is also encountered in certain \lq\lq small" $H^p$-related spaces; namely, it occurs 
\cite{DJAM} for the {\it star-invariant subspaces} $H^p\cap\theta\,\ov{H^p_0}$ associated with an inner function $\theta$. 

\par Also related to \eqref{eqn:factcv} is Aleksandrov and Peller's work \cite{AP} that dealt with the case 
of $p\in[1,\infty)$ and $n=1$. There, for a given $f\in H^p$, a {\it weak factorization} $f'=\sum_{j=1}^4g_jh'_j$ 
was constructed with suitable $g_j\in H^p$ and $h_j\in H^\infty$. Yet another weak factorization theorem from \cite{AP}, 
which establishes a connection between $\bmoa'$ and $(H^\infty)'$, will be employed in Section 4 below. 

\par The purpose of this paper is to find out whether -- and/or to which extent -- the (strong) factorization 
theorem \eqref{eqn:factcv} carries over to the {\it Nevanlinna class} $\cN$, or the {\it Smirnov class} $\cN^+$, 
in place of $H^p$. 

\par Let us recall that $\cN$ is defined as the set of functions $f\in\mathcal H(\D)$ with 
$$\sup_{0<r<1}\int_\T\log^+|f(r\ze)|\,|d\ze|<\infty,$$ 
while $\cN^+$ is formed by those $f\in\cN$ which satisfy 
$$\lim_{r\to1^-}\int_\T\log^+|f(r\ze)|\,|d\ze|=\int_\T\log^+|f(\ze)|\,|d\ze|.$$ 
Equivalently, the elements of $\cN$ (resp., $\cN^+$) are precisely the ratios $u/v$, with $u,v\in H^\infty$ and 
with $v$ nonvanishing (resp., outer) on $\D$; for this and other characterizations of the two classes, see 
\cite[Chapter II]{G}. 
\par As far as factorization theorems of the form \eqref{eqn:factcv} are concerned, we can hardly expect the 
behavior of $\cN$ or $\cN^+$ to mimic that of $H^p$ too closely. In fact, as we shall soon explain, it is 
the \lq\lq easy" part of \eqref{eqn:factcv}, i.\,e., the inclusion 
\begin{equation}\label{eqn:easyfactcv}
\left(H^p\right)^{(n)}\supset H^p\cdot\bmoa^{(n)}
\end{equation}
that admits no extension to the Nevanlinna or Smirnov setting. Meanwhile, we remark 
that \eqref{eqn:easyfactcv} is indeed easy to deduce, at least for $p=2$, from the 
(not so easy, but readily available) descriptions of $\left(H^p\right)^{(n)}$ and 
$\bmoa^{(n)}$ as the appropriate Triebel spaces; see \cite{T}. 
One of these tells us that, for $\ph\in\mathcal H(\D)$, 
$$\ph\in\left(H^p\right)^{(n)}\iff\int_\T\left(\int_0^1|\ph(r\ze)|^2(1-r)^{2n-1}dr\right)^{p/2}|d\ze|<\infty$$
for all $n\in\N$ and $0<p<\infty$, a fact that has no counterpart for $\cN$ or $\cN^+$. 
The other, which involves a Carleson measure characterization of $\bmoa$, will be mentioned in Section 2 below. 
\par Now, to see that the $\cN$ and $\cN^+$ versions of \eqref{eqn:easyfactcv} actually break down, already for $n=1$, 
we recall Hayman's and Yanagihara's results from \cite{Ha, Y} saying that neither $\cN$ nor $\cN^+$ is invariant with 
respect to integration. More explicitly, Hayman \cite{Ha} gave an example of a function $f\in\cN$ whose antiderivative 
$F(z):=\int_0^zf(\ze)d\ze$ is not in $\cN$; Yanagihara \cite{Y} then strengthened this by showing that $F$ need not be 
in $\cN$ even for $f\in\cN^+$. Consequently, $\cN^+$ is not contained in $\cN'$, whence {\it a fortiori} 
\begin{equation}\label{eqn:noncont1}
\cN\not\subset\cN'\quad\text{\rm and}\quad\cN^+\not\subset(\cN^+)'. 
\end{equation}
Since $\cN\cdot\bmoa'$ (resp., $\cN^+\cdot\bmoa'$) contains $\cN$ (resp., $\cN^+$), we readily deduce 
from \eqref{eqn:noncont1} that 
\begin{equation}\label{eqn:noncont2}
\cN\cdot\bmoa'\not\subset\cN'\quad\text{\rm and}\quad\cN^+\cdot\bmoa'\not\subset(\cN^+)'.
\end{equation}
A similar conclusion holds for higher order derivatives as well. 
\par We prove, however, that the \lq\lq difficult" part of \eqref{eqn:factcv}, i.\,e., the inclusion 
\begin{equation}\label{eqn:diffactcv}
\left(H^p\right)^{(n)}\subset H^p\cdot\bmoa^{(n)}
\end{equation}
does remain valid with either $\cN$ or $\cN^+$ in place of $H^p$. 

\begin{thm}\label{thm:factnevsmi} For each $n\in\N$, we have 
$$\cN^{(n)}\subset\cN\cdot\bmoa^{(n)}\quad\text{and}
\quad(\cN^+)^{(n)}\subset\cN^+\cdot\bmoa^{(n)}.$$ 
More precisely, given $f\in\cN$ (resp., $f\in\cN^+$), one can find a zero-free function $g\in\cN$ 
(resp., an outer function $g\in\cN^+$) and an $h\in\bmoa$ such that $f^{(n)}=gh^{(n)}$. 
\end{thm}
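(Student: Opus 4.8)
The plan is to reduce the Nevanlinna/Smirnov statements to the Hardy space factorization \eqref{eqn:diffactcv} by an exponential change of variables. Given $f\in\cN$, write $f=u/v$ with $u,v\in H^\infty$ and $v$ zero-free on $\D$ (and $v$ outer if $f\in\cN^+$). The derivative $f^{(n)}$ is, by Leibniz, a sum of terms involving $u^{(j)}$ and derivatives of $1/v$; but this direct route produces the obstruction in \eqref{eqn:noncont2}, so instead I would work multiplicatively. The key observation is that one can choose a single zero-free $H^\infty$-function $\psi$ — essentially a high negative power of an outer function built from $1+|u|+|v|$, or more efficiently a function of the form $\exp(-c\,\Phi)$ with $\Phi$ a suitable outer function with positive real part — so that $\psi f\in H^p$ for some (equivalently every) $p\in(0,\infty)$, and moreover $\psi$ and all of $\psi^{-1}$'s derivatives stay controlled. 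Then $F:=\psi f\in H^p$, and \eqref{eqn:diffactcv} gives $F^{(n)}=G\,h^{(n)}$ with $G\in H^p$ outer and $h\in\bmoa$.

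The next step is to recover $f^{(n)}$ from $F^{(n)}$. Here I would differentiate the identity $f=\psi^{-1}F$ via Leibniz:
\begin{equation}\label{eqn:leibrec}
f^{(n)}=\sum_{k=0}^{n}\binom{n}{k}(\psi^{-1})^{(n-k)}F^{(k)}.
\end{equation}
The terms with $k<n$ are lower-order and must themselves be absorbed; this suggests running the argument inductively on $n$, assuming the theorem for all smaller orders. For the top term $\psi^{-1}F^{(n)}=\psi^{-1}G\,h^{(n)}$: since $\psi^{-1}$ is zero-free in $\cN$ (it is the reciprocal of a bounded zero-free function) and $G\in H^p\subset\cN$, the product $g_0:=\psi^{-1}G$ is in $\cN$, and it is zero-free precisely when $G$ is — which holds because $G$ may be taken outer. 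So the top term already has the desired shape $g_0 h^{(n)}$ with $g_0$ zero-free. For the Smirnov case one checks in addition that $\psi^{-1}$ can be arranged so that $\psi^{-1}$, hence $g_0$, is outer: choose $\psi=e^{-c\Phi}$ with $\Phi$ outer and $\Re\Phi>0$, so $\psi^{-1}=e^{c\Phi}$ is outer, and $G$ outer then makes $g_0$ outer.

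The remaining task is to combine the top term $g_0 h^{(n)}$ with the lower-order terms from \eqref{eqn:leibrec}, each of which is handled by the inductive hypothesis and yields an expression $g_k h_k^{(k)}$ with $g_k\in\cN$ zero-free and $h_k\in\bmoa$. So I am left needing: a sum $\sum_k g_k h_k^{(k)}$ (finitely many terms, $k\le n$, each $g_k$ zero-free in $\cN$ or outer in $\cN^+$, each $h_k\in\bmoa$) can be rewritten as a \emph{single} product $g h^{(n)}$ with $g$ zero-free in $\cN$ (resp.\ outer in $\cN^+$) and $h\in\bmoa$. This is the main obstacle, and I expect it is the technical heart of the paper: one wants to "upgrade" lower-order BMOA-derivatives to $n$th derivatives and to merge several $\cN$-factors into one zero-free (outer) factor. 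The merging of zero-free $\cN$-factors is easy (the product of finitely many zero-free $\cN$-functions is again zero-free in $\cN$; similarly for outer $\cN^+$-functions), so the real issue is the order mismatch among the $h_k^{(k)}$'s. I would attack this by noting that if $h\in\bmoa$ then antiderivatives of $\bmoa$-derivatives behave well under multiplication by $H^\infty$ — more precisely, I expect to use a lemma (proved in a later section, which the excerpt promises and which I may invoke) that $H^\infty\cdot\bmoa^{(k)}\subset \cN\cdot\bmoa^{(n)}$ for $k\le n$, or equivalently that any $\varphi h^{(k)}$ with $\varphi\in H^\infty$, $h\in\bmoa$, and $k\le n$ can itself be factored as (zero-free $\cN$)$\,\times\,\bmoa^{(n)}$; applying this to each lower-order term and then collecting all the $\cN$-factors into a single zero-free (resp.\ outer) one finishes the induction.
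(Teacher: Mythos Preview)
Your proposal has a genuine gap in the treatment of the lower-order Leibniz terms, and it also misdiagnoses the direct approach. The ``direct route'' via Leibniz on $f=u/v$ does \emph{not} run into the obstruction \eqref{eqn:noncont2}; that obstruction concerns antidifferentiation, not the forward expansion of $f^{(n)}$. In fact the paper uses precisely this direct route: one writes $f^{(n)}=\sum_k\binom{n}{k}u^{(n-k)}(1/v)^{(k)}$, expands $(1/v)^{(k)}$ by Fa\`a di Bruno, and factors $v^{-n-1}$ out of every term. What remains is a polynomial in $u,u',\dots,v,v',\dots$, each monomial of which lies in $\bmoa^{(n)}$ by the Carleson-measure characterization (Lemma~\ref{lem:factderbmoa}): a function in $\bmoa^{(l)}$ times a holomorphic function that is $O((1-|z|)^{-k})$ lands in $\bmoa^{(k+l)}$. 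Thus $f^{(n)}=v^{-n-1}\cdot(\text{something in }\bmoa^{(n)})$, and $g=v^{-n-1}$ is visibly zero-free (outer in the $\cN^+$ case). No induction and no appeal to the $H^p$ theorem are needed.

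Your detour through an exponential $\psi$ and Cohn--Verbitsky does not avoid the real difficulty. After Leibniz you face the terms $(\psi^{-1})^{(n-k)}F^{(k)}$ for $k<n$; the inductive hypothesis controls $F^{(k)}$ but says nothing about the factor $(\psi^{-1})^{(n-k)}$, which lies in $\cN$ but is neither bounded nor zero-free. Consequently the lemma you ``expect to use'', namely $H^\infty\cdot\bmoa^{(k)}\subset\cN\cdot\bmoa^{(n)}$, is not the one you actually need: your coefficients are not in $H^\infty$. What is really required is a structural analysis of $(\psi^{-1})^{(n-k)}$ --- which, for the natural choice $\psi=v$, is exactly the Fa\`a di Bruno computation the paper performs --- together with the Carleson-measure fact encoded in Lemma~\ref{lem:factderbmoa}. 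Your proposal never identifies this tool, and the ``merging'' and induction steps you sketch do not substitute for it; in particular, you acknowledge the order-upgrade from $\bmoa^{(k)}$ to $\bmoa^{(n)}$ as ``the main obstacle'' but leave it to an unspecified lemma.
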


\par It should be mentioned that our method also applies to the meromorphic Nevanlinna class $\cN_{\rm mer}$, 
defined as the set of quotients $u/v$, where $u,v\in H^\infty$ and $v$ is merely required to be non-null. 
Moreover, a glance at our proof of Theorem \ref{thm:factnevsmi} will reveal that if the original function $f$ 
is of the form $F/I$, with $F\in\cN^+$ and $I$ inner, then we may take $g=G/I^{n+1}$, with $G$ outer. 
And again, just as in the $H^p$ setting, our factorization theorem yields information on the zero sets. 

\begin{cor}\label{cor:zerosnev} We have 
$$\mathcal Z\left(\cN^{(n)}\right)=\mathcal Z\left(\bmoa^{(n)}\right),\qquad n\in\N.$$
\end{cor}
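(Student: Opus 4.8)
The plan is to derive the corollary directly from Theorem~\ref{thm:factnevsmi}; once the factorization with a zero-free Nevanlinna factor is in hand, no further analysis is needed, and the passage to zero sets is purely formal (exactly as the deduction of \eqref{eqn:zeroshp} from \eqref{eqn:factcv} in \cite{CV}).

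First I would dispose of the inclusion $\mathcal Z(\bmoa^{(n)})\subset\mathcal Z(\cN^{(n)})$. This is immediate from $\bmoa\subset\cN$: by definition $\bmoa=H^1\cap\bmo\subset H^1\subset\cN$, so $\bmoa^{(n)}\subset\cN^{(n)}$. Hence any discrete set arising as the zero set of a non-null function $h^{(n)}$ with $h\in\bmoa$ is a fortiori the zero set of a non-null function in $\cN^{(n)}$, which gives this half.

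For the reverse inclusion $\mathcal Z(\cN^{(n)})\subset\mathcal Z(\bmoa^{(n)})$, I would take a discrete set $E\in\mathcal Z(\cN^{(n)})$, so that $E=\{z\in\D:\,f^{(n)}(z)=0\}$ for some $f\in\cN$ with $f^{(n)}\not\equiv0$. Applying the refined form of Theorem~\ref{thm:factnevsmi}, I write $f^{(n)}=gh^{(n)}$ with $g\in\cN$ \emph{zero-free} on $\D$ and $h\in\bmoa$. Since $g$ has no zeros in $\D$, the zero sets of $f^{(n)}$ and of $h^{(n)}$ coincide, and moreover $h^{(n)}=f^{(n)}/g\not\equiv0$ because $g$ is non-null. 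Therefore $E=\{z\in\D:\,h^{(n)}(z)=0\}$ is the zero set of the non-null function $h^{(n)}\in\bmoa^{(n)}$, i.e. $E\in\mathcal Z(\bmoa^{(n)})$. Combining the two inclusions yields the claimed equality for every $n\in\N$.

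I do not anticipate a genuine obstacle here: the entire content of the corollary is already carried by Theorem~\ref{thm:factnevsmi}, specifically by the possibility of choosing the $\cN$-factor zero-free. The only points that warrant a moment's care are the elementary bookkeeping around null functions — restricting attention to $f$ with $f^{(n)}\not\equiv0$, so that $E$ is genuinely a discrete zero set in the sense of the definition of $\mathcal Z(\cdot)$ — and the trivial verification $\bmoa\subset\cN$; both are routine.
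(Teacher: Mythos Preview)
Your argument is correct and matches the paper's own justification essentially verbatim: one inclusion comes from $\bmoa\subset\cN$, and the other from the zero-free factor in Theorem~\ref{thm:factnevsmi}. The extra care you take with null functions and the verification $\bmoa\subset H^1\subset\cN$ is fine but not needed beyond what the paper states.
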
 

\par Indeed, Theorem \ref{thm:factnevsmi} shows that every zero set for $\cN^{(n)}$ is a zero set 
for $\bmoa^{(n)}$, while the converse is immediate from the fact that $\bmoa\subset\cN$. 
Furthermore, since $\cN^+$ lies between $\bmoa$ and $\cN$, as does every $H^p$ with $0<p<\infty$, 
Corollary \ref{cor:zerosnev} obviously implies the identity 
$$\mathcal Z\left((\cN^+)^{(n)}\right)=\mathcal Z\left(\bmoa^{(n)}\right)$$ 
and also \eqref{eqn:zeroshp}. 

\par Finally, restricting ourselves to the case $n=1$, we wish to take a closer look at the inclusion 
$$\cN'\subset\cN\cdot\bmoa'$$ 
from Theorem \ref{thm:factnevsmi}, along with its $\cN^+$ counterpart. We know from \eqref{eqn:noncont2} that 
the inclusion is proper, and we now stress an important point of distinction between the two sides. Namely, the 
right-hand side, $\cN\cdot\bmoa'$, is ideal (i.\,e., invariant under multiplication by $H^\infty$ functions), 
whereas the left-hand side, $\cN'$, is not. Moreover, the space $\cN'$ is {\it highly nonideal} in the sense that 
even the identity function $z$ is not a multiplier thereof! (Otherwise, the formula 
$$g=(zg)'-zg',\qquad g\in\cN,$$ 
would imply that $\cN$ is contained in $\cN'$, which we know is false.) A similar remark applies to $(\cN^+)'$. 

\par Our last result states, then, that $\cN\cdot\bmoa'$ is actually the smallest ideal space containing $\cN'$, 
and that the same is true in the $\cN^+$ setting. 

\begin{thm}\label{thm:idealhull} {\rm (a)} The class $\cN\cdot\bmoa'$ is the ideal hull of $\cN'$. In other words, 
$\cN\cdot\bmoa'$ is an ideal vector space that contains $\cN'$ and is contained in every ideal space $X$ with 
$\cN'\subset X$. 
\par {\rm (b)} Similarly, $\cN^+\cdot\bmoa'$ is the ideal hull of $(\cN^+)'$. 
\end{thm}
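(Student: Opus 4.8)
The plan is to establish Theorem~\ref{thm:idealhull}(a) by verifying the two defining properties of an ideal hull: that $\cN\cdot\bmoa'$ is an ideal vector space containing $\cN'$, and that it embeds into any ideal space $X$ with $\cN'\subset X$. The first property is essentially free. That $\cN\cdot\bmoa'$ contains $\cN'$ is exactly the $n=1$ case of Theorem~\ref{thm:factnevsmi}. That it is a vector space follows because $\cN$ is closed under the operations $u_1/v_1+u_2/v_2=(u_1v_2+u_2v_1)/(v_1v_2)$ (with the denominator still zero-free) and one can bring two products $g_1h_1'$, $g_2h_2'$ to a common zero-free $\cN$-factor; here one should check that a sum $g(a_1h_1'+a_2h_2')$ with $g$ zero-free in $\cN$ and $h_i\in\bmoa$ can be rewritten as $\tilde g\,\tilde h'$ with $\tilde g\in\cN$ zero-free and $\tilde h\in\bmoa$ — but since $a_1h_1'+a_2h_2'$ need not itself be a derivative of a $\bmoa$ function, the cleaner route is simply to observe $\bmoa'$ is a vector space and $h_1'+h_2'=(h_1+h_2)'$, then absorb scalars, so closure under addition of the product set reduces to finding a common $\cN$-denominator, which the ratio representation supplies. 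Ideality is immediate: $H^\infty\cdot\cN\subset\cN$, so $H^\infty\cdot(\cN\cdot\bmoa')\subset\cN\cdot\bmoa'$.

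The substance of the theorem is the minimality (third) property. So let $X$ be any ideal vector space with $\cN'\subset X$; I must show $\cN\cdot\bmoa'\subset X$. Fix $g\in\cN$ zero-free and $h\in\bmoa$; I want $gh'\in X$. Write $g=u/v$ with $u,v\in H^\infty$ and $v$ zero-free (possible since $g\in\cN$ is zero-free — indeed one may even take $u,v$ zero-free). The idea is to produce $gh'$ as an $H^\infty$-multiple of a derivative $F'$ with $F\in\cN$, using a Leibniz-type trick to convert a product into a derivative modulo an $H^\infty$-controllable error. Concretely, consider $F:=\int_0^z g\,h' $ — but $F$ need not lie in $\cN$, by the Hayman–Yanagihara phenomenon, so this naive antiderivative is not available. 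Instead I would exploit the weak factorization of $\bmoa'$ in terms of $(H^\infty)'$ due to Aleksandrov and Peller (cited in the excerpt as forthcoming use in Section~4): there exist $\phi_j\in H^\infty$ and $\psi_j\in H^\infty$ with $h'=\sum_{j=1}^{N}\phi_j\psi_j'$. Then $gh'=\sum_j (g\phi_j)\psi_j'$ with $g\phi_j\in\cN$, so it suffices to treat a single term $g_0\psi'$ with $g_0\in\cN$ and $\psi\in H^\infty$.

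Now for $g_0\in\cN$ and $\psi\in H^\infty$, I would write $g_0=u/v$ ($u,v\in H^\infty$, $v$ zero-free) and use the product rule on $v$: since $(\psi/v)'=\psi'/v-\psi v'/v^2$, multiplying by $u$ gives $u(\psi/v)'=g_0\psi'-g_0\psi (v'/v)$. The function $\psi/v$ is a quotient of $H^\infty$ by zero-free $H^\infty$, hence in $\cN$, so $(\psi/v)'\in\cN'\subset X$; multiplying by $u\in H^\infty$ and invoking ideality of $X$ yields $u(\psi/v)'\in X$. It remains to handle the error term $g_0\psi(v'/v)$: note $v'/v$ is the logarithmic derivative of an $H^\infty$ function, and $g_0\psi\in\cN\cdot H^\infty\subset\cN$; one then iterates the same device, or better, observes that $v'/v=(\log v)'$ where $\log v\in\cN$ when $v$ is zero-free and bounded, so $v'/v\in\cN'\subset X$, and then $g_0\psi(v'/v)\in H^\infty$-multiple issues arise because $g_0\psi\notin H^\infty$ in general. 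This last point is the main obstacle: closing the recursion requires showing the error lies in $X$ even though its $\cN$-factor is not bounded. I expect to resolve it by choosing the representation $g_0=u/v$ so that both $u$ and $v$ are zero-free (always possible for a zero-free Nevanlinna function after the usual normalization), splitting $\psi'/v$ symmetrically, and running the argument so that every term ultimately has the shape $(H^\infty\text{ function})\cdot(\text{derivative of an }\cN\text{ function})$; the bookkeeping, not any deep new idea, is the real work. Part~(b) follows verbatim, replacing $\cN$ by $\cN^+$ throughout and "zero-free" by "outer", using that $\cN^+$ is an algebra closed under the ratio representation with outer denominators, that $\bmoa\subset\cN^+$, and the $\cN^+$ half of Theorem~\ref{thm:factnevsmi}.
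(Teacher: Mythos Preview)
Your overall architecture matches the paper's: use Theorem~\ref{thm:factnevsmi} and Lemma~\ref{lem:linear} for the first two properties, then reduce the minimality statement to $\cN\cdot(H^\infty)'\subset X$ via the Aleksandrov--Peller weak factorization of $\bmoa'$. That reduction is exactly what the paper does.

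Where you go astray is in the proof of $\cN\cdot(H^\infty)'\subset X$ itself. You write $g_0=u/v$ and expand $u(\psi/v)'$, which leaves the error $g_0\psi\,(v'/v)$; since the $\cN$-factor $g_0\psi$ is not bounded, ideality of $X$ does not let you absorb it, and the recursion you sketch does not close (replacing $v'/v$ by $(\log v)'$ just trades one unbounded multiplier for another). The paper's argument avoids this entirely with the most direct Leibniz identity: for $g_0\in\cN$ and $\psi\in H^\infty$,
\[
g_0\psi'=(g_0\psi)'-g_0'\psi.
\]
Here $g_0\psi\in\cN$, so $(g_0\psi)'\in\cN'\subset X$; and $g_0'\in\cN'\subset X$ with $\psi\in H^\infty$, so $g_0'\psi\in X$ by ideality. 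Hence $g_0\psi'\in X$ in one line, with no error term to chase. You actually announce a ``Leibniz-type trick'' at the outset but then apply it to the wrong product; applying it to $g_0\psi$ rather than to $\psi/v$ is the missing move.

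A smaller point: your discussion of why $\cN\cdot\bmoa'$ is a vector space is muddled. After pulling out the common denominator $1/(v_1v_2)$, what remains is $u_1v_2h_1'+u_2v_1h_2'$, and you need the fact that $H^\infty\cdot\bmoa'\subset\bmoa'$ (i.e.\ $\bmoa'$ is itself ideal, the $k=0$ case of Lemma~\ref{lem:factderbmoa}) to conclude this sum lies in $\bmoa'$. You never invoke this explicitly.
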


Now let us turn to the proofs. 

\section{Preliminaries}

A couple of lemmas will be needed. 

\begin{lem}\label{lem:factderbmoa} Let $k\ge0$ and $l\ge1$ be integers. If $\ph\in\bmoa^{(l)}$ and 
$\psi$ is a function in $\mathcal H(\D)$ satisfying 
\begin{equation}\label{eqn:psibigoh}
\psi(z)=O((1-|z|)^{-k}),\qquad z\in\D, 
\end{equation}
then $\ph\psi\in\bmoa^{(k+l)}$.
\end{lem}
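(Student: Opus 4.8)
The plan is to reduce the statement to a known characterization of $\bmoa^{(m)}$ via Carleson measures together with the standard Littlewood--Paley description of $\bmoa$ itself. Recall (this is the second Triebel-type description alluded to in the introduction, and will be recorded in Section 2) that for $m\ge 1$ a function $\Phi\in\mathcal H(\D)$ belongs to $\bmoa^{(m)}$ precisely when $|\Phi(z)|^2(1-|z|)^{2m-1}\,dA(z)$ is a Carleson measure on $\D$; equivalently, writing $\Phi=H^{(m)}$ with $H\in\bmoa$, the condition is that $|H^{(m)}(z)|^2(1-|z|)^{2m-1}\,dA(z)$ be Carleson. So the task is: given $\ph\in\bmoa^{(l)}$ and $\psi\in\mathcal H(\D)$ with $\psi(z)=O((1-|z|)^{-k})$, show that $|\ph(z)\psi(z)|^2(1-|z|)^{2(k+l)-1}\,dA(z)$ is a Carleson measure.

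First I would dispose of the growth factor coming from $\psi$: by \eqref{eqn:psibigoh} there is a constant $C$ with $|\psi(z)|\le C(1-|z|)^{-k}$, hence
\begin{equation}\label{eqn:pointwise}
|\ph(z)\psi(z)|^2(1-|z|)^{2(k+l)-1}\le C^2\,|\ph(z)|^2(1-|z|)^{2l-1},\qquad z\in\D.
\end{equation}
Thus the measure we must control is dominated pointwise by a constant times $|\ph(z)|^2(1-|z|)^{2l-1}\,dA(z)$, which is Carleson exactly because $\ph\in\bmoa^{(l)}$. Since domination of a nonnegative measure by a Carleson measure again yields a Carleson measure, we conclude $|\ph\psi(z)|^2(1-|z|)^{2(k+l)-1}\,dA(z)$ is Carleson, and therefore $\ph\psi\in\bmoa^{(k+l)}$. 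In particular the case $k=0$ is consistent: $\psi$ is then bounded and $\ph\psi$ has the same order of vanishing as $\ph$, so it lies in $\bmoa^{(l)}$.

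The only genuine subtlety, and the step I would treat with a little care, is making sure the Carleson-measure description of $\bmoa^{(m)}$ is quoted in a form valid for \emph{all} integers $m\ge 1$ with the \emph{same} normalization of the weight exponent, namely $(1-|z|)^{2m-1}$; this is what makes the arithmetic $2l-1$ to $2(k+l)-1$ line up with the gain of $k$ powers of $(1-|z|)^{-1}$ absorbed from $\psi$. This is exactly the statement recorded from \cite{T} (the Triebel-space identification of $\bmoa^{(m)}$) that Section 2 promises to recall, so once that lemma is in place the argument above is essentially the three lines of \eqref{eqn:pointwise} plus the remark that a pointwise-smaller nonnegative measure inherits the Carleson property. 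I would also note that holomorphy of $\ph\psi$ is automatic, so nothing beyond the measure estimate is needed to land back inside $\bmoa^{(k+l)}$.
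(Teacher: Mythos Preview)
Your proof is correct and is essentially identical to the paper's own argument: both invoke the Carleson-measure characterization of $\bmoa^{(m)}$ (namely, $F\in\bmoa^{(m)}$ iff $|F(z)|^2(1-|z|)^{2m-1}\,dx\,dy$ is Carleson) and then observe that the growth bound on $\psi$ gives the pointwise inequality $|\ph\psi|^2(1-|z|)^{2(k+l)-1}\le\const\cdot|\ph|^2(1-|z|)^{2l-1}$. The paper's proof is just the three-line version of what you wrote.
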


\begin{proof} It is known (see, e.\,g., \cite{J, Sh, T}) that a function $F\in\mathcal H(\D)$ 
will be in $\bmoa^{(n)}$, with $n\in\N$, if and only if 
the measure $|F(z)|^2(1-|z|)^{2n-1}dx\,dy$ (where $z=x+iy$) is a Carleson measure. 
The required result follows from this immediately, since \eqref{eqn:psibigoh} yields 
$$|\ph(z)\psi(z)|^2(1-|z|)^{2(k+l)-1}\le\const\cdot|\ph(z)|^2(1-|z|)^{2l-1}$$ 
for all $z\in\D$. 
\end{proof}

When $k=0$, the above lemma reduces to saying that 
\begin{equation}\label{eqn:idbmoan}
H^\infty\cdot\bmoa^{(n)}\subset\bmoa^{(n)}
\end{equation}
for all $n\in\N$; in other words, $\bmoa^{(n)}$ is an ideal space. This in turn leads to the next observation. 

\begin{lem}\label{lem:linear} For each $n\in\N$, the sets $\cN\cdot\bmoa^{(n)}$ and $\cN^+\cdot\bmoa^{(n)}$ 
are ideal vector spaces. 
\end{lem}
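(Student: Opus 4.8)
The plan is to show that $\cN\cdot\bmoa^{(n)}$ and $\cN^+\cdot\bmoa^{(n)}$ are (i) vector spaces and (ii) invariant under multiplication by $H^\infty$. The second property is immediate: if $f = g\ph$ with $g\in\cN$ and $\ph\in\bmoa^{(n)}$, and $b\in H^\infty$, then $bf = g(b\ph)$, and $b\ph\in\bmoa^{(n)}$ by \eqref{eqn:idbmoan}; so $bf\in\cN\cdot\bmoa^{(n)}$. The same argument works with $\cN^+$ in place of $\cN$, since $\cN^+$ is closed under multiplication (being the set of ratios $u/v$ with $u,v\in H^\infty$ and $v$ outer). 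Thus both classes are ideal as soon as they are known to be vector spaces.

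\emph{The main point} is therefore the additive closure. Both $\cN$ and $\cN^+$ are closed under multiplication but \emph{not} under addition as rings of quotients in the naive sense — however, they \emph{are} vector spaces (indeed algebras): $\cN$ is the algebra of quotients $u/v$ with $v$ zero-free in $H^\infty$, and a sum $u_1/v_1 + u_2/v_2 = (u_1v_2+u_2v_1)/(v_1v_2)$ is again of this form, the denominator $v_1v_2$ still being a zero-free $H^\infty$ function; the same reasoning applies to $\cN^+$ with "zero-free" replaced by "outer," using that a product of outer functions is outer. So $\cN$ and $\cN^+$ are algebras over $\C$. Now take two elements $f_1 = g_1\ph_1$ and $f_2 = g_2\ph_2$ of $\cN\cdot\bmoa^{(n)}$, with $g_i\in\cN$ and $\ph_i\in\bmoa^{(n)}$. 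Writing $g_i = u_i/v_i$ with $u_i,v_i\in H^\infty$ and $v_i$ zero-free, put $v := v_1 v_2$ and $g := 1/v \in\cN$ (zero-free). Then
\[
f_1 + f_2 = \f{1}{v}\bigl(u_1 v_2\,\ph_1 + u_2 v_1\,\ph_2\bigr) = g\cdot\psi,
\qquad \psi := u_1 v_2\,\ph_1 + u_2 v_1\,\ph_2 .
\]
Here $u_1 v_2$ and $u_2 v_1$ lie in $H^\infty$, so $u_1v_2\,\ph_1$ and $u_2v_1\,\ph_2$ both lie in $\bmoa^{(n)}$ by \eqref{eqn:idbmoan}; since $\bmoa^{(n)}$ is itself a vector space, $\psi\in\bmoa^{(n)}$. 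Hence $f_1+f_2 = g\psi \in\cN\cdot\bmoa^{(n)}$. Closure under scalar multiplication is trivial (absorb the scalar into the $\bmoa^{(n)}$ factor, or into $g$), and $0$ belongs to the set. This proves $\cN\cdot\bmoa^{(n)}$ is a vector space; the identical computation with $v_i$ outer and $g = 1/v\in\cN^+$ handles $\cN^+\cdot\bmoa^{(n)}$.

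\emph{The one thing to watch} is that the common-denominator trick genuinely keeps us inside the stated factor classes — i.e. that $1/(v_1v_2)$ is in $\cN$ (resp.\ $\cN^+$) and that the "numerators" $u_iv_j$ are bounded — but both are immediate from the quotient characterizations of $\cN$ and $\cN^+$ recalled in the introduction and from the fact that products of zero-free (resp.\ outer) $H^\infty$ functions are again zero-free (resp.\ outer). Beyond that, the whole argument is a routine combination of these algebraic facts with the ideal property \eqref{eqn:idbmoan} of $\bmoa^{(n)}$ established just above, so I expect no real obstacle.
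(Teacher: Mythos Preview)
Your proof is correct and follows essentially the same route as the paper: the ideal property is declared immediate, and additive closure is obtained by the common-denominator trick $f_1+f_2=\dfrac{1}{v_1v_2}\bigl(u_1v_2\ph_1+u_2v_1\ph_2\bigr)$ together with the ideal property \eqref{eqn:idbmoan} of $\bmoa^{(n)}$. The only cosmetic difference is that the paper absorbs the $H^\infty$ multiplier into the $\cN$ factor rather than the $\bmoa^{(n)}$ factor, but this is inessential.
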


\begin{proof} It is clear that the two sets are invariant under multiplication by $H^\infty$ functions, 
but maybe not quite obvious that they are vector spaces. It is the linearity property 
$$f_1,f_2\in\cN\cdot\bmoa^{(n)}\implies f_1+f_2\in\cN\cdot\bmoa^{(n)}$$ 
(and a similar fact with $\cN^+$ in place of $\cN$) that should be verified. To this end, we write 
$$f_j=\f{u_j}{v_j}\cdot w_j^{(n)}\qquad(j=1,2),$$
where $u_j,v_j\in H^\infty$ and $w_j\in\bmoa$, and where $v_j$ is zero-free (resp., outer if the $f_j$'s 
are from $\cN^+\cdot\bmoa^{(n)}$). Note that 
$$f_1+f_2=\f1{v_1v_2}\cdot\left(u_1v_2w_1^{(n)}+u_2v_1w_2^{(n)}\right).$$ 
The two terms in brackets, and hence their sum, will be in $\bmoa^{(n)}$ by virtue of \eqref{eqn:idbmoan}, 
while the factor $1/(v_1v_2)$ will be in $\cN$ (resp., in $\cN^+$). 
\end{proof}

\section{Proof of Theorem \ref{thm:factnevsmi}}

We treat the case of $\cN$ first. Take $f\in\cN$ and write $f=u/v$, where $u,v\in H^\infty$ 
and $v$ has no zeros in $\D$. We have then 
\begin{equation}\label{eqn:leibniz}
f^{(n)}=\sum_{k=0}^n{n\choose k}u^{(n-k)}(1/v)^{(k)}. 
\end{equation}
For each $k\in\{0,\dots,n\}$, Fa\`a di Bruno's formula (see \cite[Chapter 3]{Sch}) yields 
\begin{equation}\label{eqn:faa} 
\left(\f 1v\right)^{(k)}=\sum C(m_1,\dots,m_k)\,v^{-m_1-\dots-m_k-1}
\prod_{j=1}^k\left(v^{(j)}\right)^{m_j}, 
\end{equation}
where the sum is over the $k$-tuples $(m_1,\dots,m_k)$ of nonnegative integers satisfying 
\begin{equation}\label{eqn:multi}
m_1+2m_2+\dots+km_k=k
\end{equation}
and where 
$$C(m_1,\dots,m_k)=(-1)^{m_1+\dots+m_k}\f{(m_1+\dots+m_k)!}{m_1!\dots m_k!}
\f{k!}{1!^{m_1}\dots k!^{m_k}}.$$ 
For any fixed multiindex $(m_1,\dots,m_k)$ as above, we clearly have 
\begin{equation}\label{eqn:conyo}
v^{-m_1-\dots-m_k-1}=v^{-n-1}\cdot v^{n-m_1-\dots-m_k}, 
\end{equation}
the last factor on the right being bounded. Indeed, 
\begin{equation}\label{eqn:polla}
v^{n-m_1-\dots-m_k}\in H^\infty, 
\end{equation}
since it follows from \eqref{eqn:multi} that $n-m_1-\dots-m_k\ge0$. We further observe that, for $j\in\N$, 
\begin{equation}\label{eqn:culo}
v^{(j)}(z)=O((1-|z|)^{-j}),\qquad z\in\D
\end{equation}
(because $v\in H^\infty$), and this implies together with \eqref{eqn:multi} that 
\begin{equation}\label{eqn:growth}
\prod_{j=1}^k\left[v^{(j)}(z)\right]^{m_j}=O((1-|z|)^{-k}),\qquad z\in\D.
\end{equation}
\par Combining \eqref{eqn:faa} and \eqref{eqn:conyo}, we see that the $k$th summand in \eqref{eqn:leibniz} takes 
the form $v^{-n-1}w_k$, where 
\begin{equation}\label{eqn:pene}
w_k:={n\choose k}\sum C(m_1,\dots,m_k)\,u^{(n-k)}v^{n-m_1-\dots-m_k}\prod_{j=1}^k\left(v^{(j)}\right)^{m_j}; 
\end{equation}
the sum is understood as in \eqref{eqn:faa}. We want to show that $w_k\in\bmoa^{(n)}$, and our plan is to check 
the corresponding inclusion for each individual term in \eqref{eqn:pene}. Thus, we claim that the function 
$$\Phi_{m_1,\dots,m_k}:=u^{(n-k)}v^{n-m_1-\dots-m_k}\prod_{j=1}^k\left(v^{(j)}\right)^{m_j}$$ 
satisfies 
\begin{equation}\label{eqn:phi}
\Phi_{m_1,\dots,m_k}\in\bmoa^{(n)}
\end{equation}
whenever $0\le k\le n$ and the $m_j$'s are related by \eqref{eqn:multi}. 
\par First let us verify \eqref{eqn:phi} in the case $k\le n-1$. To this end, we notice that 
$$u^{(n-k)}\in(H^\infty)^{(n-k)}\subset\bmoa^{(n-k)},$$ 
where $n-k\ge1$, while 
$$[v(z)]^{n-m_1-\dots-m_k}\prod_{j=1}^k\left[v^{(j)}(z)\right]^{m_j}=O((1-|z|)^{-k}),\qquad z\in\D,$$ 
by virtue of \eqref{eqn:polla} and \eqref{eqn:growth}. The validity of \eqref{eqn:phi} is then guaranteed by 
Lemma \ref{lem:factderbmoa}. 
\par Now if $k=n$, then the multiindices involved are of the form $(m_1,\dots,m_n)$ with 
$\sum_{j=1}^njm_j=n$. For any such multiindex, at least one of the $m_j$'s (say, $m_l$ with an $l\in\{1,\dots,n\}$) 
must be nonzero, so that $m_l\ge1$ and 
\begin{equation}\label{eqn:rape}
l(m_l-1)+\sum_{1\le j\le n,\,j\ne l}jm_j=n-l.
\end{equation}
Consider the factorization 
$$\Phi_{m_1,\dots,m_n}=v^{(l)}\cdot\left\{uv^{n-m_1-\dots-m_n}\left(v^{(l)}\right)^{m_l-1}
\prod_{1\le j\le n,\,j\ne l}\left(v^{(j)}\right)^{m_j}\right\}.$$ 
The first factor, $v^{(l)}$, is then in $(H^\infty)^{(l)}$ and hence in $\bmoa^{(l)}$, while the second factor 
(the one in curly brackets) is $O((1-|z|)^{-n+l})$. The latter estimate is due to \eqref{eqn:culo} and 
\eqref{eqn:rape}, coupled with the fact that $u$ and $v$ are in $H^\infty$. Applying Lemma \ref{lem:factderbmoa} 
to the current factorization, we arrive at \eqref{eqn:phi}, this time with $k=n$. 
\par Now that \eqref{eqn:phi} is known to be true, we infer that the functions $w_k$ from \eqref{eqn:pene} are 
all in $\bmoa^{(n)}$, whence obviously $\sum_{k=0}^nw_k\in\bmoa^{(n)}$. Recalling that 
$$f^{(n)}=v^{-n-1}\sum_{k=0}^nw_k,$$ 
we finally conclude that $f^{(n)}$ can be written as $gh^{(n)}$, where $g:=v^{-n-1}\in\cN$ and $h$ is a 
$\bmoa$ function satisfying $h^{(n)}=\sum_{k=0}^nw_k$. 
\par The case of $\cN^+$ is similar. This time, $v$ is taken to be an {\it outer} function in $H^\infty$, so 
$g=v^{-n-1}$ will be an outer function in $\cN^+$.\quad\qed

\section{Proof of Theorem \ref{thm:idealhull}} 

We shall only prove (a), the proof of (b) being similar. 
We know from Lemma \ref{lem:linear} that $\cN\cdot\bmoa'$ is an ideal space. 
Furthermore, Theorem \ref{thm:factnevsmi} tells us that $\cN\cdot\bmoa'$ 
contains $\cN'$. It remains to verify that, whenever $X$ is an ideal space with $\cN'\subset X$, 
we necessarily have 
\begin{equation}\label{eqn:gadina}
\cN\cdot\bmoa'\subset X.
\end{equation}
\par Take any $g\in\cN$ and $h\in H^\infty$. Note that 
\begin{equation}\label{eqn:ghprime}
gh'=(gh)'-g'h,
\end{equation}
where both terms on the right are in $X$. Indeed, $(gh)'$ is obviously in $\cN'$ and hence in $X$, while the 
inclusion $g'h\in X$ is due to the facts that $g'\in\cN'\subset X$ and $hX\subset X$ (recall that $X$ is ideal). 
It now follows from \eqref{eqn:ghprime} that $gh'\in X$, and we have thereby checked that 
\begin{equation}\label{eqn:cont}
\cN\cdot(H^\infty)'\subset X. 
\end{equation}
\par Finally, given $\eta\in\bmoa$, we invoke a result of Aleksandrov and Peller \cite[Theorem 3.4]{AP} 
to find functions $\varphi_j,\psi_j\in H^\infty$ 
($j=1,2$) such that $\eta'=\varphi_1\psi'_1+\varphi_2\psi'_2$. Letting $g\in\cN$ as before, we get 
\begin{equation}\label{eqn:geta}
g\eta'=g\varphi_1\psi'_1+g\varphi_2\psi'_2.
\end{equation}
Here, the two terms of the form $g\varphi_j\psi'_j$ are in $\cN\cdot(H^\infty)'$, so we infer 
from \eqref{eqn:cont} that they are also in $X$. The right-hand side of \eqref{eqn:geta} is therefore 
in $X$, and so is the left-hand side, $g\eta'$. Thus we conclude that $g\eta'\in X$ for all 
$g\in\cN$ and $\eta\in\bmoa$. This establishes \eqref{eqn:gadina} and completes the proof.\quad\qed

\end{document}